\theoremstyle{plain}
\newtheorem{thm}{Theorem}[section]
\newtheorem{prop}[thm]{Proposition}
\theoremstyle{definition}
\newtheorem{ex}[thm]{Example}
\newtheorem{defn}[thm]{Definition}
\newtheorem{remark}[thm]{Remark}
\newtheorem{conj}[thm]{Conjecture}
\newtheorem*{conj*}{Conjecture}
\newcommand{\F}{{\mathbb F}}
\newcommand{\N}{{\mathbb N}}
\newcommand{\Z}{{\mathbb Z}}
\newcommand{\Q}{{\mathbb Q}}
\newcommand{\Fp}{{\mathbb F}_p}
\newcommand{\Fpbar}{\overline{{\mathbb F}}_p}
\newcommand{\Qpbar}{\overline{{\Q}}_p}
\newcommand{\Qbar}{\overline{{\Q}}}
\newcommand{\Zp}{\Z_p}
\newcommand{\Qp}{\Q_p}
\newcommand{\rhobar}{\overline{\rho}}
\DeclareMathOperator{\GL}{GL}
\DeclareMathOperator{\SL}{SL}
\DeclareMathOperator{\Gal}{Gal}
\DeclareMathOperator{\Ind}{Ind}
\DeclareMathOperator{\ind}{ind}
\DeclareMathOperator{\new}{new}
\DeclareMathOperator{\nr}{nr}
\DeclareMathOperator{\crys}{crys}
\newcommand\robout{\bgroup\markoverwith {\textcolor{blue}{\rule[0.5ex]{2pt}{0.4pt}}}\ULon} 
\newcommand\johnout{\bgroup\markoverwith {\textcolor{magenta}{\rule[0.5ex]{2pt}{0.4pt}}}\ULon}
\title[Slopes of modular forms and reducible Galois representations]{Slopes of modular forms and reducible Galois representations\\ \small An oversight in the ghost conjecture}
\author{John Bergdall and Robert Pollack}
\address{John Bergdall\\Bryn Mawr College\\ Bryn Mawr, PA 19010\\USA}
\email{jbergdall@brynmawr.edu}
\urladdr{http://jbergdall.digital.brynmawr.edu}
\address{Robert Pollack\\Department of Mathematics and Statistics \\ Boston University \\ 111 Cummington Mall \\ Boston, MA 02215\\USA}
\email{rpollack@math.bu.edu}
\urladdr{http://math.bu.edu/people/rpollack}
\subjclass[2000]{11F33 (11F85)}
\numberwithin{equation}{section}
\begin{document}

\maketitle
\setcounter{tocdepth}{1}

\begin{abstract}
The ghost conjecture, formulated by this article's authors, predicts the list of $p$-adic valuations of the non-zero $a_p$-eigenvalues (``slopes") for overconvergent $p$-adic modular eigenforms in terms of the Newton polygon of an easy-to-describe power series (the ``ghost series''). The prediction is restricted to eigenforms whose Galois representation modulo $p$ is reducible on a decomposition group at $p$. It has been discovered, however, that the conjecture is not formulated correctly. Here we explain the issue and propose a salvage.
\end{abstract}


\section{Introduction}

Let $p$ be a prime number. The authors previously made a conjecture on the $p$-adic slopes of modular eigenforms with a fixed Galois representation modulo $p$ {\em subject to} that representation being locally reducible at $p$. See Conjecture \ref{conj:old-ghost} below and also \cite[Conjecture 7.1]{BergdallPollack-GhostPaper-II}. The local reducibility condition has turned out to be too strong --- one reducible situation (and its twists) must also be excluded. This article will explain the exclusion and how to salvage the conjecture. We provide both computational and theoretical evidence for the corrected conjecture. 

It is important to correct false conjectures. In this case, however, it is specifically important because there is ongoing progress toward a proof of the conjecture, for Galois representations that are locally reducible with sufficiently generic Serre weights, by Liu, Truong, Xiao, and Zhao. A paper explaining their strategy is available as a preprint \cite{LTXZ-LocalGhost}. The proof itself has also been announced \cite{LTXZ-GhostProof}. 

A general proof, however, will remain out of reach until faulty hypotheses are removed! Addressing the prior misconception further allows us the opportunity to address, in writing, the special case of eigenforms that are twists of $E_2$ modulo $p$. We had previously not tested that case out of convenience. 

In \cite{BergdallPollack-GhostPaper-II}, the authors called the reducibility assumption ``Buzzard regular'', based on Buzzard's work \cite{Buzzard-SlopeQuestions}. It is unfortunate that Buzzard's name was attached, by us, to the faulty condition. Our strongest apologies. 

\subsection{Recollection of the ghost conjecture}
\label{sec:gc}

Fix algebraic closures $\Qbar$ of $\Q$, $\Qpbar$ of $\Qp$, and $\Fpbar$ of $\Fp$. Let $G_{\Q} = \Gal(\Qbar/\Q)$ and $N \geq 1$ be an integer that is co-prime to $p$. We then consider continuous, semi-simple, representations $\rhobar : G_{\Q} \rightarrow \GL_2(\Fpbar)$ that are modular of level $N$. Recall that if $f$ is a cuspidal, normalized, eigenform of level $\Gamma_1(N)$ with coefficients in $\Qpbar$, then there is a continuous, semi-simple, Galois representation $\rhobar_f : G_{\Q} \rightarrow \GL_2(\Fpbar)$ such that if $\ell \nmid Np$ is a prime, then $\rhobar_f$ is unramified at $\ell$ and the trace of a Frobenius element at $\ell$ is equal to $a_\ell(f)$, the $\ell$-th Fourier coefficient of $f$. Saying $\rhobar$ is modular of level $N$ means $\rhobar \cong \rhobar_f$ for such an $f$.

For $k\geq 2$ we define $S_{k}(\rhobar) \subseteq S_k(\Gamma_1(N),\overline\Q_p)$ to be the $\overline \Q_p$-span of the eigenforms $f$ of weight $k$ and level $\Gamma_1(N)$ such that $\rhobar_f \cong \rhobar$. So, $\rhobar$ is modular of level $N$ exactly when $S_k(\overline \rho) \neq \{0\}$ for some $k$. Define $\Gamma_0 = \Gamma_1(N) \cap \Gamma_0(p)$ and then $S_k^0(\rhobar) \subseteq S_k(\Gamma_0,\overline \Q_p)$ in a similar fashion.  We write the dimensions of these spaces as:
\begin{align*}
d(k,\rhobar) &= \dim_{\overline\Q_p} S_{k}(\rhobar);\\
d_p(k,\rhobar) &= \dim_{\overline\Q_p} S_{k}^0(\rhobar).
\end{align*}
Set $d_p^{\new} = d_p - 2d$. (The function $d_p^{\new}$ counts the eigenforms that are $p$-new.)

The ghost conjecture is based on the definition of a sequence of monic polynomials $g_{1}(w), g_{2}(w),\dotsc \in \Zp[w]$ as follows:
\begin{enumerate}
\item \label{a} The zeros of $g_{i}(w)$ are $w_k = (1+2p)^k-1$, for the finite list of integers $k$ such that 
$$
d(k,\bar\rho) < i < d(k,\bar\rho)+d^{\new}(k,\bar\rho).
$$
\item Fix $k$ now. If $i_1,i_2,\dotsc,i_m$ is the list of consecutive indices $i$ for which $g_i(w_k) = 0$ (the $i$ satisfying the prior inequalities), then the multiplicities of $w_k$ as a zero are:\ for $i_1,i_m$ the multiplicity is one; for $i_2,i_{m-1}$ the multiplicity is two; and so on.\label{mult-pattern}
\end{enumerate}

In other words, the multiplicity of $w_k$ as a zero of $g_{i}$ for $i = 1,2,3,\dotsc$ has the form 
$$
[0,0,\dotsc,0,1,2,3,\dotsc,3,2,1,0,0,\dotsc],
$$
where the first string of zeros ends at index $d(k,\rhobar)$ and there are $(d^{\new}(k,\rhobar)-1)$-many non-zero numbers.

Having defined the polynomials $g_{i}(w)$, we set 
\begin{equation*}
G_{\bar\rho}(w,t) = 1 + \sum_{i\geq 1} g_{i}(w)t^i \in 1 + t\Zp[\![w,t]\!].
\end{equation*}
The ghost conjecture predicts that this series models the characteristic power series of the $U_p$-operator acting on overconvergent $p$-adic cuspforms whose eigensystem modulo $p$ matches $\rhobar$.

More precisely, let $D \subseteq G_{\Q}$ be a decomposition group at $p$ and $I \subseteq D$ be the inertia subgroup. Write $\omega: G_{\Q} \rightarrow \Fp^\times$ for the  cyclotomic character modulo $p$. Then, denote by $b(\rhobar)$ the unique integer $2 \leq b(\rhobar) \leq p$ such that $\det(\rhobar)|_I  = \omega^{b(\rhobar)-1}$.

We consider the rigid analytic space $\mathcal W_{b(\rhobar)}$ whose $\Qpbar$-points are the $p$-adic weights $\kappa: \Z_p^\times \rightarrow \Qpbar^\times$ whose action on the the torsion subgroup of $\Z_p^\times$ is raising to the $b(\rhobar)$-th power. Given $\kappa \in \mathcal W_{b(\rhobar)}$, write $w_\kappa = \kappa(1+2p) - 1$ and $S_{\kappa}^{\dagger}(\rhobar)$ for the $\rhobar$-isotypic component of the space of overconvergent $p$-adic cuspforms of level $\Gamma_1(N)$ and weight $\kappa$. The zeros $w_k$ defined in (1) occur only at integer weights $k \geq 2$ that lie in $\mathcal W_{b(\rhobar)}$.  (Recall, $\det(\rhobar_f)|_I = \omega^{k-1}$, so if $d(k,\rhobar) \neq 0$ then $b(\rhobar) \equiv k \bmod p-1$.)

The ghost conjecture for $\rhobar$ stated in \cite{BergdallPollack-GhostPaper-II} is:

\begin{conj}[False]\label{conj:old-ghost} Assume $p \geq 5$ and $\rhobar \not\cong \omega^j \oplus \omega^{j+1}$ for any $j$.  If $\rhobar|_D$ is reducible, then, for each $\kappa \in \mathcal W_{b(\rhobar)}$, the Newton polygon of $G_{\bar\rho}(w_\kappa,t)$ is the same as the Newton polygon of the characteristic series of the $U_p$-operator acting on $S_{\kappa}^{\dagger}(\bar\rho)$.
\end{conj}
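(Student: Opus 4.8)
The plan is to reduce the global comparison of Newton polygons to a purely local statement at $p$ and then to establish the local statement by an explicit, inductive analysis of the matrix of the $U_p$-operator; this is the strategy of the local ghost theorem \cite{LTXZ-LocalGhost}. First I would invoke a local-global compatibility identifying the $\rhobar$-isotypic space $S_{\kappa}^{\dagger}(\rhobar)$, together with its $U_p$-action, with a space of $p$-adic distributions whose structure depends only on $\rhobar|_D$. Since $\rhobar|_D$ is reducible it carries a stable line, and I would use that line to produce an integral triangulation: a basis over $\Zp[\![w]\!]$ adapted to the resulting filtration. The outcome of this first stage is an explicit infinite matrix $M(w)$ for $U_p$ with entries in $\Zp[\![w]\!]$, so that the series to be compared with $G_{\rhobar}(w,t)$ is $\det(1 - M(w)t)$.

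Second, I would estimate the $p$-adic and $w$-adic valuations of the entries of $M(w)$. The reducibility should force $M(w)$ into an approximately triangular shape in which the diagonal entries carry the dominant contribution while the off-diagonal entries are divisible by large powers of $p$ (or, near the boundary of $\mathcal{W}_{b(\rhobar)}$, of $w$). Such estimates are precisely what allow the Newton polygon to be read off combinatorially from the diagonal rather than computed analytically.

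Third --- the step where the ghost series enters --- I would run an induction on the index $i$, matching the two Newton polygons weight-by-weight. By construction the zeros $w_k$ of $g_i(w)$, with the palindromic multiplicity pattern $[1,2,\dotsc,2,1]$, occur exactly at the integer weights $k \geq 2$ where a new $p$-new eigenform enters; on the $U_p$-side these are the weights where Coleman's control theorem forces a classical form of the prescribed slope, and the palindromic pattern should reflect the local crossing of $U_p$-eigenvalue branches along $\mathcal{W}_{b(\rhobar)}$ near such a weight. I would first check agreement at every classical $k$, where the low slopes are pinned down by $d(k,\rhobar)$ and $d_p^{\new}(k,\rhobar)$ via control, and then propagate to arbitrary $\kappa$ using that both $t \mapsto \det(1 - M(w_\kappa)t)$ and $t \mapsto G_{\rhobar}(w_\kappa,t)$ are $p$-adic analytic in $\kappa$ with continuously varying Newton data.

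The main obstacle I expect is controlling the genuinely overconvergent (non-classical) slopes in this third step. Classical dimension counts only constrain slopes below the classicality bound $k-1$, whereas the conjecture predicts slopes that can exceed it; matching those requires honest $p$-adic estimates on the off-diagonal entries of $M(w)$, uniform across the weight disk. This analytic heart is presumably also where the argument fails for the excluded representations: when $\rhobar \cong \omega^j \oplus \omega^{j+1}$ --- and, as this paper observes, for the overlooked twist of $E_2$ --- the triangulation degenerates, the diagonal approximation of $M(w)$ breaks down, and the combinatorial reading of the Newton polygon no longer matches $G_{\rhobar}(w,t)$.
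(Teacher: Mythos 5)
The statement you are trying to prove is labeled \textbf{[False]} in the paper: it is the \emph{incorrect} version of the ghost conjecture, and the paper's treatment of it is a refutation, not a proof. Your proposal sets out to prove it, which cannot succeed. The paper exhibits an explicit counter-example satisfying every hypothesis of the conjecture: take $p=5$, $N=3$, and $\rhobar = \omega\oplus\omega\chi$ with $\chi$ the quadratic character of conductor $3$. Here $\rhobar|_D$ is reducible and $\rhobar\not\cong\omega^j\oplus\omega^{j+1}$, yet $g_1(w)=w-w_3$ gives $v_5(g_1(w_7))=1$, so the Newton polygon of $G_{\rhobar}(w_7,t)$ begins with a slope at most $1$; by Coleman's classicality theorem such a slope would have to be realized by a classical form, but the actual $U_5$-slopes on $S_7^0(\rhobar)$ are $[\tfrac{5}{2},\tfrac{5}{2},3,3]$. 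Proposition \ref{prop:theory-evidence} shows the failure is systematic: whenever $(\rhobar|_D)^{\operatorname{ss}}$ is a twist of an unramified representation with Frobenius trace zero --- a reducible but \emph{irregular} case in the sense of Definitions \ref{defn:irregular-local} and \ref{defn:regular-global} --- the ghost series of some twist of $\rhobar$ provably gets a slope wrong.

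Beyond the basic issue of trying to prove a false statement, your diagnosis of where the argument would break is inverted. You locate the degeneration at $\rhobar\cong\omega^j\oplus\omega^{j+1}$ (twists of $E_2$), but those are already excluded by the hypothesis of Conjecture \ref{conj:old-ghost}, and the salvaged Conjecture \ref{conj:ghost} actually \emph{re-admits} them: they are regular, and Example \ref{exam:E2} gives numerical support for them. The genuinely problematic cases are those with $(\rhobar|_D)^{\operatorname{ss}}$ a twist of $\nr(\sqrt{-1})\oplus\nr(-\sqrt{-1})$, which satisfy the stated hypotheses of the false conjecture. A local analysis of the kind you sketch (which is indeed the strategy of \cite{LTXZ-LocalGhost}) must therefore assume regularity, not mere reducibility, of $\rhobar|_D$; under the hypotheses as written, no refinement of the off-diagonal estimates on your matrix $M(w)$ can rescue the comparison, because the two Newton polygons genuinely differ already at the classical weight $7$ in the example above.
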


\subsection{Counter-examples}\label{subsec:counter-example}
We give two counter-examples here.

\subsubsection{Detailed example}\label{subsubsec:detailed}
Let $p = 5$ and $N = 3$. We choose $\bar\rho = \omega \oplus \omega\chi$ where $\chi$ is the quadratic Dirichlet character of conductor 3. The dimension functions $d$, $d_p$, and $d_p^{\new}$ can be determined using a computer algebra system with built-in libraries for modular forms, such as Magma \cite{MagmaCite} or Sage \cite{sagemath}. The result is compiled in Table \ref{table:isotypic-components}. From the dimensions, we see that 
\begin{align*}
g_{1}(w) &= w-w_3;\\
g_{2}(w) &= w-w_7;\\
g_{3}(w) &= (w-w_{11})(w-w_{15})\dotsb.
\end{align*}
Since $g_{1}(w_7)$ has $5$-adic valuation 1, the least slope on the Newton polygon of $G_{\rhobar}(w_7,t)$ is 1 or less. On the other hand, any weight 7 overconvergent $p$-adic eigenform with slope at most 1 is classical by Coleman's classicality theorem \cite[Theorem 6.1]{Coleman-ClassicalandOverconvergent}. Yet, the slopes of $U_5$ acting on $S_7^0(\rhobar) \subseteq S_7^{\dagger}$ are $[\frac{5}{2},\frac{5}{2},3,3]$ (calculated using Magma). This is the contradiction.

\begin{table}[htp]
\renewcommand{\arraystretch}{1.1}
\caption{Dimensions of $\rhobar$-isotypic components in $S_k(\Gamma_1(3))$ and $S_k(\Gamma_1(3)\cap \Gamma_0(5))$, for $\rhobar = \omega \oplus \omega\chi$.}
\begin{center}
\begin{tabular}{|c|c|c|c|}
\hline
$k$ & $d(k,\bar\rho)$ & $d_p(k,\bar\rho)$ & $d_p^{\new}(k,\bar\rho)$\\
\hline
3 & 0 & 2 & 2\\
7 & 1 & 4 & 2\\ 
11 & 2 & 6 & 2\\
15 & 2 & 8 & 4\\
$\vdots$ & $\vdots$ & $\vdots$ & $\vdots$\\
\hline
\end{tabular}
\end{center}
\label{table:isotypic-components}
\end{table}%

\begin{remark}\label{rmk:code-remark}
Strictly speaking, neither Magma (nor Sage) currently has intrinsic commands to determine the slopes in $S_k(\rhobar)$. The authors provide code in the github repository \cite{BergdallPollack-RhobarCode}.
\end{remark}

\subsubsection{Another example}
The reader might wonder if the fundamental issue in the prior example is that either $\rhobar$ is Eisenstein or $\rhobar \cong \rhobar \otimes \chi$, both somehow global phenomena. This is not the case. 

Explicitly, continue to let $p = 5$ and also let $\psi$ be the quadratic character modulo $43$. Then, in $S_7(\Gamma_1(43),\psi)$ there is a 20-dimensional Galois orbit of newforms in which the $5$-adic slope $2$ occurs twice, once for an absolutely irreducible $\rhobar$ and once more for $\rhobar \otimes \psi$ (which is a new representation). Further, $S_7(\rhobar)$ is 1-dimensional and thus 2 is the lowest slope occurring for $\rhobar$. Yet, as in Section \ref{subsubsec:detailed}, one can check that the ghost series formalism would predict the lowest slope in $S_7(\rhobar)$ is 1.

In fact, $\rhobar \otimes \omega^{-1}$ lifts to an ordinary eigenform of weight $5$, and thus $\rhobar$ is locally reducible at $p = 5$.  By Proposition \ref{prop:equiv-regular} and point (ii) in the proof of Theorem \ref{thm:equivalence} below, we even have
\begin{equation}\label{eqn:extension}
\rhobar|_D \cong \begin{pmatrix} \nr(\alpha)\omega & \ast \\ 0 & \nr(-\alpha)\omega \end{pmatrix}
\end{equation}
where $\alpha \in \overline{\F}_5^\times$. (For unfamiliar notations or definitions, see Section \ref{sec:regularity-define}.) The extension \eqref{eqn:extension} is even non-split! If it were split, then by \cite[Theorem 4.5]{Edixhoven-Weights} the representation $\rhobar \otimes \omega^{-1}$ would arise from a weight one form over $\overline{\F}_5$ of level 43 and character $\psi$. We verified such a form does not exist using a computer package called {\tt Weight1}, which calculates weight one eigenforms modulo $p$, written by Wiese. (The code, which requires Magma, is currently available on Wiese's website \cite{Wiese-WeightOnePackage}.)

\subsubsection{The general phenomenon}
The common link between the examples is  indicated by the equation \eqref{eqn:extension}. In both cases, $\rhobar \otimes \omega^{-1}$ is reducible at $p=5$ and its semi-simplification is an unramified representation for which a Frobenius element acts with trace zero. Generalizing these examples, the fundamental gap we found in the ghost conjecture occurs for $\rhobar$ that are twists of representations which locally at $p$ are reducible, with semi-simplification that is unramified with Frobenius trace zero. Indeed, in Proposition \ref{prop:theory-evidence} we will show that the ghost conjecture will always fail in such examples. However, as long as we exclude such representations (which we do in Section \ref{sec:regularity-define}), we believe that the ghost conjecture will hold for remaining $\rhobar$'s.

\subsection{A brief history}
We first encountered the example in Section \ref{subsubsec:detailed} in 2015-16, when we learned by explicit computation that the ghost series formalism failed to predict the 5-adic slopes of eigenforms of level $\Gamma_1(3)$ (with no restriction on the Galois representations modulo 5). We apparently overlooked that computation when formulating the $\rhobar$-version of our conjecture. Later, in 2018, James Newton asked us about the $\rhobar$-conjecture for examples where $\rhobar$ is unramified at $p$ (up to a twist). We mistakenly missed the counter-examples once more. We realized our mistake only in early 2021 after our attention was diverted toward questions on reductions of crystalline Galois representations.

\subsection{Plan for the remainder of the article}

In Section 2 we revisit the reducible versus irreducible dichotomy and replace it with a new one, which we name regular versus irregular. We study this new dichotomy in both local and global terms, relating it to crystalline lifts on the one hand and slopes of modular forms in low weight on the other. The salvaged conjecture and evidence is given Section 3. Concluding, and open-ended, remarks occupy Section 4.

\subsection{Acknowledgements}

We thank Liang Xiao for keeping us updated on the progress of his collaboration with Liu, Truong, and Zhao, which aims to prove the ghost conjecture. Both authors also thank the Max Planck Institute for Mathematics in Bonn, Germany, for its hospitality in July 2021, when this paper was largely written. The research was also supported by a Simons Collaboration Grant (PI:\ J.B.\ Award \#713782) and an NSF grant (PI:\ R.P.\ DMS-1702178).

\section{Salvaging the ghost conjecture:\ redefining regularity}\label{sec:regularity-define}

To salvage Conjecture \ref{conj:old-ghost}, we redefine regularity, replacing the reducible vs.\ irreducible dichotomy with a slightly different one. In this section we let $p$ be any prime, unless noted. 

We first fix notations for local Galois representations modulo $p$. Let $\Q_{p^2}$ be the unramified quadratic extension of $\Q_p$ and write $\omega_2 : G_{\Q_{p^2}} \rightarrow \overline\F_{p}^\times$ for a niveau 2 fundamental character. Let $G_{\Qp} = \Gal(\Qpbar/\Qp)$ and $G_{\Q_{p^2}}  = \Gal(\Qpbar/\Q_{p^2}) \subseteq G_{\Qp}$. We then write $\ind(\omega_2^s) = \Ind_{G_{\Q_p^2}}^{G_{\Q_p}}(\omega_2^s\eta)$ where $\eta$ is the quadratic character of $\Q_{p^2}$. Thus $\ind(\omega_2^s)$ has determinant $\omega^s$ and $\ind(\omega_2^s)|_I = \omega_2^{s} \oplus \omega_2^{sp}$. Given $\alpha \in \Fpbar^\times$ we also write $\nr(\alpha)$ for the character on $G_{\Q_p}$ that is unramified and whose value on a Frobenius element is $\alpha$. In this notation, $\ind(1) = \nr(\sqrt{-1}) \oplus \nr(-\sqrt{-1})$.

The continuous, semi-simple, representations $\overline r: G_{\Q_p} \rightarrow \GL_2(\overline \F_p)$ are completely classified. Up to a twist by a character, any $\overline r$ is isomorphic to either $\ind(\omega_2^s)$ for some $0 \leq s \leq p-1$ or $\nr(\alpha) \oplus \nr(\beta)\omega^t$ with $\alpha,\beta \in \Fpbar^\times$ and $0 \leq t \leq p-2$. The irreducible $\overline r$ occur for twists of $\ind(\omega_2^s)$ with $1 \leq s \leq p-1$. The representation $\ind(\omega_2^s)$ is reducible when $s \equiv 0 \bmod p+1$.

\begin{defn}\label{defn:irregular-local}
A continuous, semi-simple, representation $\overline r: G_{\Q_p} \rightarrow \GL_2(\overline \F_p)$ is called {\em irregular} if $\overline r$ is a twist of $\ind(\omega_2^s)$ for some $s \in \Z$.
\end{defn}

Naturally, we say $\overline r$ is regular when it is not irregular. Note that regular representations are always reducible, but the regularity of $\overline r$ depends on the entire action of $D$, whereas the reducibility depends only on $\overline r|_I$. 

The next proposition illustrates the regular versus irregular dichotomy in terms of crystalline lifts with small Hodge--Tate weights. The reader who is unfamiliar with $p$-adic Hodge theory may prefer to skip to Theorem \ref{thm:equivalence} or reference the well-written introduction to \cite{BuzzardGee-SmallSlope}. The notation is as follows. For $a_p \in \Qpbar$ such that $v_p(a_p) >0$ and $k\geq 2$, $V_{k,a_p}$ denotes the unique two-dimensional, irreducible, crystalline representation of $G_{\Qp}$ whose Hodge--Tate weights are $\{0,k-1\}$ and such that the characteristic polynomial of the crystalline Frobenius acting on $D_{\crys}(V_{k,a_p}^{\ast})$ is $t^2 - a_p t + p^{k-1}$. Write $\overline V_{k,a_p}$ for the semi-simplification of the reduction of $V_{k,a_p}$ modulo $p$.

\begin{prop}\label{prop:equiv-regular}
Let $\overline r: G_{\Qp} \rightarrow \GL_2(\Fpbar)$ be a continuous, semi-simple, representation. The following conditions are equivalent (the (a)'s and (b)'s are individually equivalent):
\begin{enumerate}
\item The representation $\overline r$ is irregular:
\begin{enumerate}
\item $\overline r$ is a twist of $\ind(\omega_2^s)$ with $s \not\equiv 0 \bmod p+1$, or
\item $\overline r$ is a twist of $\ind(\omega_2^s)$ with $s \equiv 0 \bmod p+1$.
\end{enumerate}

\item Either:
\begin{enumerate}
\item $\overline r$ irreducible, or
\item $\overline r$ a twist of an unramified representation whose Frobenius trace is zero.
\end{enumerate}
\item Either:
\begin{enumerate}
\item $\overline r$ is a twist of $\overline V_{k,a_p}$ for some $2 \leq k \leq p+1$ and $v_p(a_p) > 0$, or
\item $\overline r$ is a twist of $\overline V_{p+2,a_p}$ for any $v_p(a_p) > 1$.
\end{enumerate}
\end{enumerate}
\end{prop}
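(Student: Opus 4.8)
The plan is to separate the elementary representation theory from the single analytic input---the reduction modulo $p$ of crystalline representations---and to establish the (a)-chain $(1a)\Leftrightarrow(2a)\Leftrightarrow(3a)$ and the (b)-chain $(1b)\Leftrightarrow(2b)\Leftrightarrow(3b)$ independently.

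First I would dispatch $(1)\Leftrightarrow(2)$, which is immediate from the classification recalled just before Definition \ref{defn:irregular-local}. For the (a)'s, a twist of $\ind(\omega_2^s)$ is irreducible if and only if $\ind(\omega_2^s)$ is, that is, if and only if $s\not\equiv 0\bmod p+1$, and conversely every irreducible $\overline r$ is such a twist with $1\le s\le p-1$; this gives $(1a)\Leftrightarrow(2a)$. For the (b)'s, when $s\equiv 0\bmod p+1$ one twists $\ind(\omega_2^s)$ back to $\ind(1)=\nr(\sqrt{-1})\oplus\nr(-\sqrt{-1})$, which is unramified with Frobenius trace $\sqrt{-1}+(-\sqrt{-1})=0$; conversely any unramified semisimple representation with trace zero has the form $\nr(\alpha)\oplus\nr(-\alpha)$ and is therefore a twist of $\ind(1)$. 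In particular $\ind(1)$ is, up to twist, the unique unramified trace-zero semisimple representation, which gives $(1b)\Leftrightarrow(2b)$.

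The substance is $(1)/(2)\Leftrightarrow(3)$, for which I would invoke known computations of $\overline V_{k,a_p}$. In the range $2\le k\le p$, Fontaine--Laffaille theory gives $\overline V_{k,a_p}\cong\ind(\omega_2^{k-1})$ for every $v_p(a_p)>0$; since $1\le k-1\le p-1$ this is irreducible, and taking $k=s+1$ realizes any irreducible $\overline r=$ (twist of $\ind(\omega_2^s)$, $1\le s\le p-1$) as a twist of some such $\overline V_{k,a_p}$. This proves $(1a)\Leftrightarrow(3a)$ in the Fontaine--Laffaille range. For the boundary weight $k=p+1$, which lies just outside that range, I would cite the explicit reductions of Buzzard--Gee \cite{BuzzardGee-SmallSlope} (small slope) and Berger--Li--Zhu (large slope) to conclude $\overline V_{p+1,a_p}\cong\ind(\omega_2^p)$---still irreducible---for all $v_p(a_p)>0$, completing $(3a)\Rightarrow(1a)$. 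For the (b)'s the key computation is that at $k=p+2$ one has $\overline V_{p+2,a_p}\cong\ind(\omega_2^{p+1})$ as soon as $v_p(a_p)>1$; since $\ind(\omega_2^{p+1})=\omega\otimes\ind(1)=\nr(\sqrt{-1})\omega\oplus\nr(-\sqrt{-1})\omega$ is a twist of an unramified trace-zero representation, this gives $(3b)\Rightarrow(2b)$, and because $\ind(1)$ is the unique such representation up to twist, the very same reduction realizes it for \emph{every} $v_p(a_p)>1$, giving $(2b)\Rightarrow(3b)$ and explaining the "for any" in (3b).

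The hard part is precisely this reduction at the boundary weights $k=p+1$ and $k=p+2$: both fall outside Fontaine--Laffaille, so one must appeal to weakly admissible module computations following Berger--Li--Zhu together with the small-slope results of Buzzard--Gee, and above all verify that the slope thresholds are sharp---that at $k=p+1$ the reduction stays irreducible for \emph{every} positive slope (including the exact value $v_p(a_p)=1$, which is covered by neither the small- nor the large-slope regime and which I expect to treat by a direct computation), and that at $k=p+2$ the passage to the reducible $\ind(\omega_2^{p+1})$ occurs exactly above slope one. The remaining friction is purely organizational: matching the niveau-$2$ exponent $s$, the weight $k$, and the unramified- and $\omega$-twists consistently across the three formulations so that each twist available on one side is available on the others.
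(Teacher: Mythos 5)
Your proposal follows essentially the same route as the paper: the equivalence $(1)\Leftrightarrow(2)$ is read off from the classification preceding Definition \ref{defn:irregular-local}, and $(1)\Leftrightarrow(3)$ is reduced to the explicit computation of $\overline V_{k,a_p}$ for $2 \leq k \leq p+2$, with the identification $\ind(\omega_2^{p+1}) = \ind(1)\otimes\omega$ giving the (b)-chain. The one loose end you flag --- the boundary case $k=p+1$, $v_p(a_p)=1$, which falls outside both the small-slope and large-slope regimes --- needs no separate direct computation: the paper's single reference (Berger, Th\'eor\`eme 3.2.1 of \cite{Berger-Modp-compat}) already gives $\overline V_{k,a_p}\cong\ind(\omega_2^{k-1})$ for all $2\leq k\leq p+1$ and \emph{all} $v_p(a_p)>0$, together with the full trichotomy \eqref{eqn:Vp+2} at $k=p+2$.
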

\begin{proof}
The equivalence of (1) and (2) follows directly from the discussion prior to Definition \ref{defn:irregular-local}. The equivalence of (1) and (3) follows from calculations of $\overline V_{k,a_p}$ when $k$ is small. Namely, if $2 \leq k \leq p+1$ then $\overline V_{k,a_p} \cong\ind(\omega_2^{k-1})$ (which is, in particular, irreducible and thus irregular). When $k=p+2$, we have:
\begin{equation}\label{eqn:Vp+2}
\overline V_{p+2,a_p} = \begin{cases}
\ind(\omega_2^2) & \text{if $0 < v_p(a_p) < 1$;}\\
\nr(\alpha) \oplus \nr(\alpha^{-1}) & \text{if $1 = v_p(a_p)$, where $\alpha+\alpha^{-1} = \frac{a_p}{p} \in \Fpbar^\times$;}\\
\ind(1)\otimes \omega&\text{if $1 < v_p(a_p)$.}
\end{cases}
\end{equation}
Thus $\overline V_{p+2,a_p}$ is irregular when $v_p(a_p) \neq 1$. See \cite[Th\'eor\`eme 3.2.1]{Berger-Modp-compat} for these results. From them, we plainly see (3) implies (1) and (1) implies (3) since any irregular $\overline r$ is a twist of $\ind(\omega_2^s)$ for some $0 \leq s \leq p-1$.
\end{proof}

\begin{remark}
The trivial representation mod 2 is irregular by Proposition \ref{prop:equiv-regular}(2), since it is unramified with Frobenius trace $2 = 0$. See Section \ref{sec:small-primes}.
\end{remark}

We now turn toward global considerations. Given a representation $\rho$, write $\rho^{\operatorname{ss}}$ for the semi-simplification. (This depends on the group that $\rho$ is representing, but the context will be clear any time we use the notation.) Since the decomposition groups $D \subseteq G_{\Q}$ at $p$ are all conjugate and isomorphic to $G_{\Qp}$, we can speak unambiguously about a global representation $\overline \rho$ being regular on $D$ or not:

\begin{defn}\label{defn:regular-global}
We say $\rhobar$ is regular if $(\rhobar|_D)^{\operatorname{ss}}$ is regular.
\end{defn}

The action of $D$ is reducible for every regular $\rhobar$, so our new regular condition implies the condition in Conjecture \ref{conj:old-ghost}. The difference is that we have further excluded one particular twist-stable family of reducible representations. 

We describe now a mechanism  in terms of the arithmetic of modular forms to determine whether or not a given $\rhobar$ is regular.  Write
$$
S_k(\!(\overline \rho)\!) = \sum_{j=0}^{p-2} S_{k}(\overline \rho \otimes \omega^j)  \subseteq S_k(\Gamma_1(N),\Qpbar).
$$
The space $S_k(\!(\overline \rho)\!)$ is stable under the $T_p$-operator. Recall, if $f$ is an eigenform then $v_p(a_p(f)) \geq 0$. We say $f$ is ordinary if $v_p(a_p(f)) = 0$.

\begin{thm}\label{thm:equivalence}
Consider the following two conditions:
\begin{enumerate}
\item The representation $\overline \rho$ is regular.
\item For all $2 \leq k \leq p+1$, the eigenforms in $S_{k}(\!(\overline \rho)\!)$ are all ordinary, and the eigenforms in $S_{p+2}(\!(\rhobar)\!)$ are either ordinary or satisfy $v_p(a_p(f)) = 1$.
\end{enumerate}
Then, (1) always implies (2). If $p \geq 3$, then (2) implies (1).

More precisely:
\begin{enumerate}[label=(\alph*)]
\item If $\rhobar|_D$ is irreducible, then there exists a non-ordinary eigenfrom $g$ of weight between $2$ and $\frac{p+3}{2}$ such that $\rhobar$ is a cyclotomic twist of $\rhobar_g$.
\item If $p \geq 3$ and $\rhobar$ is irregular but $\rhobar|_D$ is reducible, then there exists a $j$ such that $S_{p+2}(\rhobar\otimes \omega^j) \neq \{0\}$ and $v_p(a_p(g)) > 1$ for all $g \in S_{p+2}(\rhobar \otimes \omega^j)$.
\label{part:thm:equivalence-red}
\end{enumerate}
\end{thm}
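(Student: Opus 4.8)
The plan is to use local-global compatibility from $p$-adic Hodge theory to convert the slope conditions in (2) into the shape of $(\rhobar|_D)^{\operatorname{ss}}$, and then to read off regularity directly from Proposition \ref{prop:equiv-regular}. The one input I would use throughout is that for an eigenform $f \in S_k(\Gamma_1(N),\Qpbar)$ of level prime to $p$ the restriction $\rho_f|_{G_{\Qp}}$ is crystalline with Hodge--Tate weights $\{0,k-1\}$ and crystalline Frobenius polynomial $t^2 - a_p(f)t + p^{k-1}$, so that $\rho_f|_{G_{\Qp}} \cong V_{k,a_p(f)}$ and hence $(\rhobar_f|_D)^{\operatorname{ss}} \cong \overline V_{k,a_p(f)}$. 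With this in hand, (1) $\Rightarrow$ (2) is essentially immediate in contrapositive form and needs nothing about the prime, covering the ``always'' claim: if $f \in S_k(\!(\rhobar)\!)$ with $2 \le k \le p+1$ is non-ordinary then $v_p(a_p(f)) > 0$, so $(\rhobar_f|_D)^{\operatorname{ss}} = \ind(\omega_2^{k-1})$ is irreducible (as $1 \le k-1 \le p$ is never divisible by $p+1$) and hence irregular; since $\rhobar_f$ is a cyclotomic twist of $\rhobar$, Proposition \ref{prop:equiv-regular} makes $\rhobar$ irregular. Likewise, if $f \in S_{p+2}(\!(\rhobar)\!)$ has $v_p(a_p(f)) > 1$ then \eqref{eqn:Vp+2} gives $(\rhobar_f|_D)^{\operatorname{ss}} = \ind(1)\otimes\omega$, again irregular.

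For (2) $\Rightarrow$ (1) with $p \ge 3$ I would argue contrapositively, assuming $\rhobar$ irregular and producing a form violating (2); this is precisely (a) and (b), organized by the dichotomy of Proposition \ref{prop:equiv-regular}(2). In case (a), $\rhobar|_D$ is irreducible, so $(\rhobar|_D)^{\operatorname{ss}}$ is a twist of $\ind(\omega_2^s)$ with $1 \le s \le p-1$. Using $\ind(\omega_2^s) \cong \ind(\omega_2^{ps})$ together with $ps \equiv -s \bmod (p+1)$, and that twisting by $\omega$ shifts $s$ by $p+1$ on inertia, I would normalize $s = k-1$ with $k \in [2,\tfrac{p+3}{2}]$. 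The weight part of Serre's conjecture (a theorem, via Khare--Wintenberger and work of Edixhoven, Gross and others) then supplies an eigenform $g$ of this weight $k$ with $\rhobar_g$ a cyclotomic twist of $\rhobar$; and $g$ is non-ordinary because $\rhobar_g|_D$ is irreducible whereas ordinary forms have reducible local representation. Since $k \le \tfrac{p+3}{2} \le p+1$, this $g$ violates (2).

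In case (b), $\rhobar$ is irregular with $\rhobar|_D$ reducible, so $(\rhobar|_D)^{\operatorname{ss}} = \mu \otimes \ind(1)$ for some character $\mu$ (Frobenius trace zero). Because $\det(\rhobar|_D)|_I = \omega^{b(\rhobar)-1}$ equals $(\mu|_I)^2$ and is therefore a square, $b(\rhobar)-1$ is even (this is where $p \ge 3$ enters), and I can solve for an integer $j$ with $(\rhobar\otimes\omega^j)|_I \cong \omega\oplus\omega$, i.e. with $(\rhobar\otimes\omega^j)|_D$ equal, up to an unramified twist, to $\ind(1)\otimes\omega$. The slope assertion for this $j$ is then forced by \eqref{eqn:Vp+2}: for any $g \in S_{p+2}(\rhobar\otimes\omega^j)$ we have $\overline V_{p+2,a_p(g)} = (\rhobar\otimes\omega^j|_D)^{\operatorname{ss}}$, which is reducible and ramified, so the case $v_p(a_p(g)) < 1$ (yielding the irreducible $\ind(\omega_2^2)$) and the case $v_p(a_p(g)) = 1$ (yielding an unramified representation) are both impossible, leaving $v_p(a_p(g)) > 1$.

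The remaining existence statement $S_{p+2}(\rhobar\otimes\omega^j) \ne \{0\}$ is the step I expect to be the main obstacle. Weight $p+2$ is not a Serre weight in the usual fundamental range (it is congruent to $3$ modulo $p-1$, and corresponds to the reducible $\GL_2(\Fp)$-representation $\Sym^{p}$), and the Hodge--Tate weight $p+1$ lies outside the Fontaine--Laffaille range. Thus I cannot read off modularity in weight $p+2$ from the standard recipe; instead I would invoke the crystalline-lift direction of the weight part of Serre's conjecture, noting that $\rhobar\otimes\omega^j$ admits the crystalline lift $V_{p+2,a_p}$ with $v_p(a_p) > 1$ by Proposition \ref{prop:equiv-regular}(3), to conclude that it is genuinely modular in characteristic-zero weight $p+2$. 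The delicacy is tied to whether $(\rhobar\otimes\omega^j)|_D$ is a split or non-split extension: in the split case one must rule out that the representation is instead accounted for by a companion form of weight one. This is exactly the verification carried out for the explicit example in Section \ref{subsec:counter-example}, where non-existence of weight-one forms is used to force the relevant extension to be non-split.
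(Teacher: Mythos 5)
Your direction (1)$\Rightarrow$(2) and your case (a) track the paper's argument closely and are essentially sound (the paper produces the low-weight non-ordinary form via Edixhoven's theorem and $\theta$-cycles rather than by citing the weight part of Serre's conjecture, but the content is the same). Two problems remain, one minor and one serious.

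The minor one: your ``one input used throughout'' --- that $\rho_f|_{G_{\Qp}} \cong V_{k,a_p(f)}$ for every eigenform of level prime to $p$ --- is false for ordinary $f$, since $V_{k,a_p}$ is by definition irreducible (and is only defined for $v_p(a_p)>0$), whereas $\rho_f|_D$ is reducible in the ordinary case. The paper keeps the two cases separate: Deligne/Wiles for ordinary forms (giving $(\rhobar_f|_I)^{\operatorname{ss}} \cong 1 \oplus \omega^{k-1}$) and Breuil for non-ordinary ones. In your case (b) slope argument the possibility $v_p(a_p(g))=0$ must be excluded by the ordinary description, not by \eqref{eqn:Vp+2}; this is easily patched but as written your trichotomy on $v_p(a_p(g))$ is incomplete.

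The serious one is exactly the step you flag: the existence statement $S_{p+2}(\rhobar\otimes\omega^j)\neq\{0\}$. You do not prove it, and the route you sketch does not work as stated. Weight $p+2$ is outside the range of Serre weights and outside Fontaine--Laffaille, so ``the crystalline-lift direction of the weight part of Serre's conjecture'' is not available off the shelf; moreover one would need a crystalline lift of the actual local extension class, not merely of the semi-simplification, which is all that regularity/irregularity controls. You also misdiagnose the split case as an obstruction to be ``ruled out'': if $(\rhobar\otimes\omega^{j-1})|_D$ is split, then up to twist $\rhobar$ is unramified at $p$, hence by Gross and Coleman--Voloch it arises from a mod-$p$ weight-one form, and applying $\theta$ to a weight-one form lands precisely in weight $p+2$ --- so the split case \emph{produces} the desired form rather than threatening it. (The weight-one computation in Section \ref{subsec:counter-example} is used there to show an extension is non-split, not to establish existence in weight $p+2$.) The paper's argument is elementary and uniform: it first shows that, up to twist, $\rhobar$ comes either from an ordinary eigenform of weight $p$ or from a weight-one form, with $(\rhobar|_I)^{\operatorname{ss}}\cong 1\oplus 1$ in both cases, and then a single application of the $\theta$-operator (via \cite[Proposition 3.3]{Edixhoven-Weights}) yields an eigenform of weight $p+2$ with the right twisted representation. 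Without that (or an equivalent) argument, part \ref{part:thm:equivalence-red} is not established.
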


\begin{proof}
To start, we recall two fundamental facts. For any eigenform $f$, we let
$$
\rho_f : G_{\Q} \rightarrow \GL_2(\Qpbar)
$$
be the corresponding global $p$-adic Galois representation. Then:

\begin{enumerate}[label=(\roman*)]
\item If $f$ is ordinary, then $\rho_f|_D$ is reducible and $(\rhobar_f|_I)^{\operatorname{ss}} \cong 1 \oplus \omega^{k-1}$.
\item  If $f$ is non-ordinary, then $\rho_f|_D$ is isomorphic to a twist of $V_{k,a_p(f)}$.
\end{enumerate}
It seems the first result was first proven by Deligne in the 1970's but never published. A common reference is \cite[Theorem 2]{Wiles-OrdinaryModular}. For the second, see \cite[Th\'eor\`eme 6.5]{Breuil-SomeRepresentations2}.

Now we prove (1) implies (2). Suppose that $\rhobar$ is regular and $f$ is an eigenform of weight $2 \leq k \leq p+2$ such that $\rhobar_f = \rhobar \otimes \omega^j$. Then, either $f$ is ordinary or $\overline V_{k,a_p(f)}$ is regular by (ii). By Proposition \ref{prop:equiv-regular}, and the classification of $\overline{V}_{p+2,a_p}$ in equation \eqref{eqn:Vp+2}, the latter can only happen if $k = p+2$ and $v_p(a_p(f)) = 1$. This proves (1) implies (2).

Now we will prove (2) implies (1). By \cite[Theorem 3.4]{Edixhoven-Weights} there exists an eigenform $f$ of weight $1 \leq k \leq p+1$ where $\rhobar \cong \rhobar_f \otimes \omega^j$, with the caveat that $f$ may be an Eisenstein series or, in the case $k=1$, $f$ may be only a mod $p$ modular form.  Furthermore, the theory of $\theta$-cycles (see \cite[Proposition 3.3]{Edixhoven-Weights}) predicts the least positive weight in which $\rhobar_f \otimes \omega^j$ lifts to a modular eigenform.

Now consider (a), where $\rhobar|_D$ is irreducible. Then, the possibility of $k = 1$ cannot actually occur. Indeed, if $f$ has weight one then $\rhobar_f|_D$ is unramified by theorems of Gross, Coleman--Voloch, and Wiese. See \cite[Corollary 1.3]{Wiese-WeightOne}. The theory of $\theta$-cycles ($a_p = 0$ and $k \geq 2$ in \cite[Proposition 3.3]{Edixhoven-Weights} cited above) then implies there exists a non-ordinary eigenform $g$ of weight between $2$ and $\frac{p+3}{2}$ such that $\rhobar_g \cong \rhobar_f \otimes \omega^j$ for some $j$. This completes the proof of (a). 

 In case (b), we assume that $p \geq 3$ and after twisting that  $\rhobar = \rhobar_f$ with $\rhobar_f|_D$ reducible, yet irregular and $f$ has weight $1 \leq k \leq p+1$. We first claim that either:\ $k=p$ and $f$ is ordinary, or $k=1$.  Further, in both cases $(\rhobar_f|_I)^{\operatorname{ss}} \cong 1 \oplus 1$. 

To see this, note first that if $k=1$ then, as above, $\rhobar_f$ is unramifed at $p$ and thus $(\rhobar_f|_I)^{\operatorname{ss}} \cong 1 \oplus 1$.  If $k \geq 2$, we first check that $f$ is ordinary. Indeed, if $f$ is Eisenstein, then it is ordinary because $p \nmid N$. If $f$ were cuspidal non-ordinary, then (ii) and Proposition \ref{prop:equiv-regular} would imply $\rhobar_f|_D$ is irreducible, which it is not. So, $f$ is ordinary. Then, by (i) we see $(\rhobar_f|_I)^{\operatorname{ss}} \cong 1 \oplus \omega^{k-1}$. But $\rhobar$ is irregular, and so Proposition \ref{prop:equiv-regular} implies $\omega^{k-1} = 1$, which implies $k = p$ since $2\leq k \leq p+1$.

Finally, regardless of whether $k = p$ and $f$ is ordinary, or $k=1$, the theory of $\theta$-cycles guarantees there exists a modular eigenform $g$ of weight $p+2$ such that $\rhobar_g \cong \rhobar_f \otimes \omega$. For such $g$, we have $(\rhobar_g|_I)^{\operatorname{ss}} \cong \omega \oplus \omega$ and thus $g$ is non-ordinary by (i).  Here we use $p \neq 2$ to know $\omega \neq 1$. In particular we've shown that $S_{p+2}(\rhobar_f \otimes \omega) \neq \{0\}$. We also see from (ii) that $\overline V_{p+2,a_p(g)}$ is reducible and irregular for any $g \in S_{p+2}(\rhobar_f \otimes \omega)$. Thus, by Proposition \ref{prop:equiv-regular}, $v_p(a_p(g)) > 1$ for all $g \in S_{p+2}(\rhobar_f \otimes \omega)$. This completes the proof of (b).
\end{proof}

\begin{remark}
A key step in the proof of part \ref{part:thm:equivalence-red} of Theorem \ref{thm:equivalence} is that if $f$ is an ordinary eigenform of weight $p$, then $\rhobar_f \otimes \omega$ lifts to an eigenform of weight $p+2$. This statement also holds when $f$ is non-ordinary, as long as $p \geq 5$. For instance, one could directly argue using modular symbols modulo $p$ as in \cite[Theorem 3.4(a,b)]{AshStevens-Duke}. (That reference assumes $p \geq 5$ throughout.)

For $p = 3$, though, there exist weight 3 forms $f$ where $\rhobar_f \otimes \omega$ doesn't lift to weight 5. For instance, if $\chi$ is the quadratic character modulo 7, then $S_3(\Gamma_1(7),\chi)$ is spanned by a unique eigenform (\cite[Newform 7.3.b.a]{LMFDB}) whose $q$-expansion begins $q - 3q^2 + \dotsb$, whereas $S_5(\Gamma_1(7),\chi)$ is spanned by a unique eigenform (\cite[Newform 7.5.b.a]{LMFDB}) whose $q$-expansion is $q + q^2 + \dotsb$. These two eigenforms are not twists of each other modulo 3.
\end{remark}

\section{The salvaged conjecture and evidence}

We now state for the record our salvaged $\rhobar$-ghost conjecture. We place into the conjecture the hypotheses we need, as a reminder.

\begin{conj}[The salvaged $\rhobar$-ghost conjecture]\label{conj:ghost}
Let $p \geq 5$. 
If $\rhobar$ is regular in the sense of Definition \ref{defn:regular-global}, then, for each $\kappa \in \mathcal W_{b(\rhobar)}$, the Newton polygon of $G_{\bar\rho}(w_\kappa,t)$ is equal to the Newton polygon of the characteristic series of the $U_p$-operator acting on $S_{\kappa}^{\dagger}(\bar\rho)$.
\end{conj}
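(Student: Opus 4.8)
The plan is to match two families of Newton polygons over the weight disc $\W_{b(\rhobar)}$: on the automorphic side, the Newton polygon of the characteristic series $\det(1 - t\,U_p \mid S_{\kappa}^{\dagger}(\rhobar))$, and on the combinatorial side, the Newton polygon of the explicit ghost series $G_{\rhobar}(w_\kappa,t)$. Both sides are power series in $t$ whose coefficients are rigid-analytic (in fact Iwasawa) functions of $\kappa$, so I would first reduce the comparison to the classical integer weights $k \geq 2$ lying in $\W_{b(\rhobar)}$ together with a boundary-of-weight-space analysis. The coefficients of the $U_p$-characteristic series interpolate $p$-adically; the ghost coefficients $g_i(w)$ are polynomials in $w$ vanishing exactly at the classical $w_k$; and a Newton-polygon equality that holds on a $p$-adically dense set of weights, with matching behaviour near the boundary, should propagate to all of $\W_{b(\rhobar)}$ by the continuity and convexity of slopes in analytic families.

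The heart of the argument is to produce an integral model for $U_p$ whose characteristic series can be compared coefficient-by-coefficient against the $g_i(w)$. Following the strategy of Liu--Truong--Xiao--Zhao \cite{LTXZ-LocalGhost}, I would localize the problem: after fixing $\rhobar$ and using regularity to pin down $(\rhobar|_D)^{\operatorname{ss}}$ as a reducible local representation not of the form $\ind(\omega_2^s)$ up to twist, one attaches a ``local ghost'' series to the local deformation/automorphic data and shows, via a local--global (patching) compatibility, that its Newton polygon governs the slopes of $U_p$. One then proves, purely combinatorially, that this local ghost has the same Newton polygon as $G_{\rhobar}$. The dimension bookkeeping already built into the $g_i$ --- vanishing precisely when $d(k,\rhobar) < i < d(k,\rhobar)+d^{\new}(k,\rhobar)$, together with the symmetric multiplicity pattern $[0,\dotsc,0,1,2,\dotsc,2,1,0,\dotsc]$ --- is exactly what should force the two combinatorial objects to agree.

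A key input at the low-weight end is Theorem \ref{thm:equivalence}. The vertices of the ghost polygon nearest the origin are controlled by the slopes of $U_p$ in weights $2 \leq k \leq p+2$, and condition (2) of that theorem guarantees, under regularity, that these are the expected slopes ($0$ in weights $\leq p+1$, and $\leq 1$ in weight $p+2$), matching the ghost prediction. This is also where the excluded irregular case breaks the comparison: by part \ref{part:thm:equivalence-red}, an irregular $\rhobar$ forces a weight $p+2$ form with $v_p(a_p) > 1$, producing a steeper initial slope than $G_{\rhobar}$ predicts --- precisely the mechanism behind the counterexamples in Section \ref{subsec:counter-example}.

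I expect the main obstacle to be the local--global comparison step: showing that the abstractly defined local ghost genuinely computes the Newton polygon of $U_p$ on all of $S_{\kappa}^{\dagger}(\rhobar)$, uniformly in $\kappa$. This demands control of the entire overconvergent spectrum, not merely the classical small-slope part accessible through Coleman's classicality theorem \cite{Coleman-ClassicalandOverconvergent}, and it must cope with the non-generic Serre weights that current announcements \cite{LTXZ-GhostProof} still restrict. The boundary (spectral halo) region, where slopes become explicitly linear in the boundary parameter, is the natural place to anchor the argument; the delicate part will be propagating the equality into the interior of $\W_{b(\rhobar)}$ while preserving the exact combinatorial match with $G_{\rhobar}$.
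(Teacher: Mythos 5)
The statement you are trying to prove is a \emph{conjecture}, and the paper contains no proof of it. What the paper offers is (a) numerical evidence in a range of weights for several specific $\rhobar$, and (b) a converse statement (Proposition \ref{prop:theory-evidence}): if $\rhobar$ is \emph{irregular}, then the ghost series provably fails. The authors explicitly state that a proof is only announced, by Liu--Truong--Xiao--Zhao, for locally reducible $\rhobar$ with sufficiently generic Serre weights, and that ``a general proof \ldots will remain out of reach.'' So there is no paper proof to compare against, and your text should not be read as a proof of anything.

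Judged on its own terms, your proposal is a research outline rather than an argument: every load-bearing step is deferred. The reduction to classical weights plus a boundary analysis is not carried out --- Newton polygons of $p$-adic families are only locally constant in the weight, and equality on a dense set of classical weights does not propagate to all of $\W_{b(\rhobar)}$ without substantial input (this is essentially the content of the halo/spectral decomposition problem, not a consequence of ``continuity and convexity''). The central local--global step (that a ``local ghost'' attached to $(\rhobar|_D)^{\operatorname{ss}}$ computes the $U_p$-Newton polygon on $S_\kappa^{\dagger}(\rhobar)$) is exactly the announced LTXZ theorem, which you cite but do not prove, and which in its announced form excludes non-generic Serre weights --- in particular the twists of $E_2$ ($\rhobar \cong \omega^j \oplus \omega^{j+1}$) that this paper deliberately adds back into the conjecture. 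Your use of Theorem \ref{thm:equivalence} to anchor the low-weight slopes is sound and mirrors how the paper uses it in Proposition \ref{prop:theory-evidence}, but it only constrains the first $d(k,\rhobar)$ slopes in weights $k \leq p+2$; it says nothing about the rest of the polygon. In short: the mechanism you describe is the right one to pursue, but as written it establishes nothing beyond what the paper already presents as open.
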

In the next subsections, we provide computational and theoretical evidence for our salvaged conjecture. 

\begin{remark}
Notice that Conjecture \ref{conj:ghost} includes $\rhobar \cong \omega^j \oplus \omega^{j+1}$, in contrast with Conjecture \ref{conj:old-ghost}. See Section \ref{subsec:Galois-mult} below.
\end{remark}

\begin{remark}
Condition (2) in Theorem \ref{thm:equivalence} appears in Buzzard's slope conjectures  when $p = 2$ (see \cite[Definition 1.3]{Buzzard-SlopeQuestions}). For odd primes, it was irrelevant in Buzzard's work, since restricting to level $\Gamma_0(N)$ removes concerns about odd weights $p$ and $p+2$.
\end{remark}

\subsection{Galois multiplicities}\label{subsec:Galois-mult}
In order to construct the series $G_{\rhobar}(w,t)$, and thus test Conjecture \ref{conj:ghost}, one must understand the functions $d(k,\rhobar)$ and $d_p(k,\rhobar)$. In fact, each of these functions are determined after a finite computation. Specifically, either can be determined from the values $d(k',\rhobar \otimes \omega^i)$ with $2 \leq k' \leq p+1$ and $0 \leq i \leq p-2$ along with multiplicities of Eisenstein series (if $\rhobar$ is Eisenstein). This is explained in \cite[Section 6]{BergdallPollack-GhostPaper-II}. The technique involves modular symbols modulo $p$, which is the source of the assumption $p \geq 5$. We do not currently know dimension formulas when $p=2,3$. For an alternative method via the trace formula, which is adaptable to $p=2,3$, see forthcoming work of Anni, Ghitza, and Medvedovsky \cite{AnniGhitzaMedvedovsky-Dimensions}.

There is one caveat:\ the exposition in \cite{BergdallPollack-GhostPaper-II} ignores the case $\rhobar \cong \omega^j \oplus \omega^{j+1}$ (``twists of $E_2$'') for convenience. These are precisely the Galois representations modulo $p$ that contribute to torsion in $H^2_c$'s, which creates complications. However, the complications are luckily limited to exposition. The actual result, that low weight calculations for $\rhobar$ and all its twists will determine $\rhobar$-multiplicities in all weights, are still valid for twists of $E_2$. 

So, while omitting more specific details, we used this principle in order to carry out the numerical testing explained in the next section, regardless of whether $\rhobar$ is a twist of $E_2$ or not. (But still, $p \geq 5$.) For the interested reader, we also created computer code that implements the natural process to calculate $d(k,\rhobar)$  from low weight values.  See \cite{BergdallPollack-RhobarCode}.

\subsection{Numerical evidence}\label{subsec:numerical}
In \cite[Section 7.2]{BergdallPollack-GhostPaper-II}, we gave significant numerical evidence for Conjecture \ref{conj:ghost}. Here we report more numerical evidence, especially highlighting the following two cases:
\begin{enumerate}
\item Representations $\rhobar: G_{\Q} \rightarrow \GL_2(\Fpbar)$ with $(\rhobar|_D)^{\operatorname{ss}}$ a twist of unramified with {\em non-zero} Frobenius trace (Examples \ref{exam:one_plus_chi}-\ref{exam:p=5-N=23}).
\item Twists of $1 \oplus \omega$ (Example \ref{exam:E2}).
\end{enumerate}
The latter was a case not numerically tested in \cite{BergdallPollack-GhostPaper-II}, and the former is meant to ensure the issue with the counter-examples in Section \ref{subsec:counter-example} has been accurately diagnosed.

In each example, we describe a single $\rhobar$ and explain why it is regular. Then, when we write that we ``verified Conjecture \ref{conj:ghost} in this case for weights $k$...'' we mean:\ for each $\rhobar' = \rhobar\otimes \omega^i$ we calculated the $T_p$-slopes on $S_k(\rhobar')$ and checked that they matched the first $d(k,\rhobar')$-many slopes on $G_{\rhobar'}(w_k,t)$. That is, we checked (using the code referenced in Remark \ref{rmk:code-remark}) the ghost series accurately predicts the classical slopes for $\rhobar$ and each of its twists (for a range of weights).

\begin{ex}\label{exam:one_plus_chi}
Let $\chi$ denote the quadratic character of conductor $N$ thought of as taking values in $\F_p^\times$ and let $\rhobar = 1 \oplus \chi$.  If $p \nmid N$ then $\rhobar$ is unramified at $p$ and it is regular if and only if $\chi(p) = 1$.  For each of $N=3,7,11$ and for all primes $5 \leq p \leq 97$ for which $\rhobar$ is regular, we verified Conjecture \ref{conj:ghost} in this case for weights $k \leq 385, 173, 191$ respectively.  
\end{ex}

\begin{ex}\label{exam:p=5-N=23}
Let $p = 5$, $N = 23$, and $\chi$ be the quadratic character modulo 23. Then, $S_5(\Gamma_1(23),\chi)$ supports a unique $\rhobar$ whose Frobenius trace at $\ell = 2$ is equal to 2. Specifically, if $f$ is a member of the newform orbit \cite[Newform 23.5.b.b]{LMFDB} and $\mathfrak p$ is the unique prime above 5 with inertia degree 2 for the number field generated by $f$'s Hecke eigenvalues, then $\rhobar$ is the Galois representation associated with $f$ modulo $\mathfrak p$. The eigenform $f$ is $\mathfrak p$-ordinary and $\rhobar \otimes \omega$ arises from a weight 7 form with slope 1. By Theorem \ref{thm:equivalence}, $\rhobar$ is regular. Unlike the previous example, this $\rhobar$ is globally irreducible.  We verified Conjecture \ref{conj:ghost} in this case for weights $k\leq 155$. 
\end{ex}

\begin{ex}\label{exam:E2}
Let $\rhobar = 1 \oplus \omega$ which is regular.  For both of $N=1$ and $11$ and for all primes $p$ with $5 \leq p \leq 97$, we verified Conjecture \ref{conj:ghost} in this case for weights $k \leq 750, 172$ respectively.  
\end{ex}

\begin{remark}
Gouv\^ea and Mazur once conjectured that if $h$ is a rational number and $k,k' \geq 2h+2$, then the multiplicity of the ($T_p$-)slope $h$ in weight $k$ will be the same as the multiplicity in weight $k'$ provided $k \equiv k' \bmod (p-1)p^{\lceil h\rceil }$ (see \cite[Conjecture 1]{GouveaMazur-FamiliesEigenforms}). Buzzard and Calegari found many counter-examples, all occuring in situations where $\rhobar$ is locally irreducible at $p$ (\cite{BuzzardCalegari-GouveaMazur}). Those $\rhobar$ are, in particular, irregular. In the final paragraphs of {\em loc. cit.}, however, it is mentioned that Clay noticed issues in a globally reducible, and in fact regular, situation.

Specifically, let $\rhobar = \omega + \omega\chi$ where $\chi$ is the quadratic character modulo 4, which is regular for $p = 5$ because $\chi(5) = \chi(1) = 1$ (compare with Example \ref{exam:one_plus_chi}). The violation of Gouv\^ea--Mazur, which Clay noticed, occurs because the $5$-adic $\rhobar$-slopes in weight $k=7$ at level $\Gamma_1(4)$ are $\{1,1\}$ (two times), whereas in weight $k=27$ they are $\{1,1,1,1\}$ (four times). We highlight this because this violation of the Gouv\^ea--Mazur conjecture is consistent with Conjecture \ref{conj:ghost}. That is, the ghost series predicts exactly the same slope pattern.

The data collected in Example \ref{exam:one_plus_chi} gives rather systematic counter-examples to the Gouv\^ea--Mazur conjecture  (on $\rhobar$-subspaces), all of which are consistent with Conjecture \ref{conj:ghost}. The issue is also a local one, as opposed to a global one, it seems. We found the same phenomenon in the setting of Example \ref{exam:p=5-N=23}, which is globally irreducible. Specifically, in that example, in weight $7$ the slope sequence is $\{1\}$, in weight 27 it is $\{1,1\}$. Conjecture \ref{conj:ghost} predicts these slopes.
\end{remark}

\subsection{Theoretical evidence}

For theoretical evidence, we show that $\rhobar$ must be regular if the ghost series correctly calculates slopes of modular forms.

\begin{prop}\label{prop:theory-evidence}
Let $p\geq 5$. If $\rhobar$ is irregular, then the ghost series of some twist of $\rhobar$ does not correctly compute slopes of modular forms.  That is, there is some $j$ and some weight $\kappa \in \mathcal W_{b(\rhobar\otimes \omega^j)}$ such the slopes of the Newton polygon of $G_{\rhobar \otimes \omega^j}(w_\kappa,t)$ do not match the slopes of $U_p$ acting on $S_{\kappa}^{\dagger}(\rhobar  \otimes \omega^j)$.
\end{prop}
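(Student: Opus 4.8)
The plan is to use Proposition \ref{prop:equiv-regular} and Theorem \ref{thm:equivalence} to split ``$\rhobar$ irregular'' into its two shapes --- $(\rhobar|_D)^{\operatorname{ss}}$ irreducible, or a twist of an unramified representation with zero Frobenius trace (yet reducible) --- and in each case to exhibit a single integer weight at which the ghost polygon of an appropriate twist predicts a slope strictly below Coleman's classicality bound $k-1$ that no classical eigenform attains. This is exactly the mechanism of the detailed counter-example of Section \ref{subsubsec:detailed}, now made to run in general.

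\textbf{The locally irreducible case.} Here I would argue that the ghost series degenerates. A $p$-new eigenform is Steinberg at $p$, so its local Galois representation is reducible; hence $(\rhobar_h|_D)$ is reducible for every $p$-new $h$. Since $\rhobar|_D$ is irreducible, no twist of $\rhobar$ supports a $p$-new form, i.e.\ $d^{\new}(k,\rhobar\otimes\omega^j)=0$ for all $k,j$. By the defining recipe every $g_i$ is then the constant polynomial $1$, so $G_{\rhobar}(w,t)=(1-t)^{-1}$ and its Newton polygon is identically the zero polygon. But $\rhobar$ is modular, so $d(k,\rhobar)>0$ for some $k$, and by point (i) in the proof of Theorem \ref{thm:equivalence} every such eigenform is non-ordinary and thus contributes positive $U_p$-slopes, contradicting the all-zero prediction.

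\textbf{The reducible irregular case.} This is the substantive case. After twisting I may assume, by Theorem \ref{thm:equivalence}\ref{part:thm:equivalence-red}, that $(\rhobar'|_I)^{\operatorname{ss}}=\omega\oplus\omega$ (so $b(\rhobar')=3$ and the relevant weights are $3,\,p+2,\,2p+1,\dotsc$) and that $S_{p+2}(\rhobar')\neq\{0\}$ with $v_p(a_p(g))>1$ for every such $g$. First I would record that every classical $\rhobar'$-eigenform of weight $p+2$ and level $\Gamma_0$ has slope $>1$: the $p$-old stabilizations have slopes summing to $p+1$ with minimum $v_p(a_p)>1$ (or both equal to $(p+1)/2>1$), and the $p$-new forms have slope $p/2$; moreover there are no ordinary such forms, as ordinarity would force $(\rhobar'|_I)^{\operatorname{ss}}=1\oplus\omega^{p+1}=1\oplus\omega^2\neq\omega\oplus\omega$. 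Next I would pin down the zeros of $g_1$: one has $d(3,\rhobar')=0$ (an ordinary weight-$3$ form gives $1\oplus\omega^2$ on inertia, a non-ordinary one gives the irreducible $\overline V_{3,a_p}=\ind(\omega_2^2)$, both impossible), while $d(k,\rhobar')\geq 1$ for every weight $k\geq p+2$ in the class. Since $3$ is the least weight in $\mathcal W_{b(\rhobar')}$ and no weight lies strictly between $3$ and $p+2$, the only possible zero of $g_1$ is at $w_3$, whence $g_1=1$ or $g_1=w-w_3$. Evaluating at $w_{p+2}$ and using $v_p(w_{p+2}-w_3)=1+v_p(p-1)=1$, I get $v_p(g_1(w_{p+2}))\in\{0,1\}$, so the vertex $(1,v_p(g_1(w_{p+2})))$ forces the first slope of the Newton polygon of $G_{\rhobar'}(w_{p+2},t)$ to be $\leq 1$. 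As $1<p+1=(p+2)-1$, Coleman's theorem \cite{Coleman-ClassicalandOverconvergent} would make a form of that slope classical, contradicting that all classical weight-$(p+2)$ $\rhobar'$-forms have slope $>1$.

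\textbf{The main obstacle.} I expect the delicate part to be the bookkeeping with the dimension functions in the reducible case: establishing $d(3,\rhobar')=0$ together with the positivity $d(k,\rhobar')\geq 1$ for $k\geq p+2$, which is precisely what confines the zeros of $g_1$ to $w_3$ and thereby controls the first slope of the ghost polygon. In particular one must exclude extra low zeros of $g_1$ that could push $v_p(g_1(w_{p+2}))$ above $1$ and thereby weaken the slope bound; this is where the low-weight analysis of $d$ and $d_p$ (and the fact that $\rhobar'$ is modular in all weights $\geq p+2$ of the class) is indispensable. By contrast, once the bound $\leq 1<p+1$ is in hand the classicality input is routine, and the locally irreducible case is immediate.
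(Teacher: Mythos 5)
Your proposal is correct. In the locally reducible irregular case you follow essentially the paper's own route: twist via part \ref{part:thm:equivalence-red} of Theorem \ref{thm:equivalence} so that $S_{p+2}(\rhobar')\neq\{0\}$ with every $v_p(a_p(g))>1$, confine the zeros of $g_1$ to $w_3$ with multiplicity at most $1$, compute $v_p(g_1(w_{p+2}))\leq 1$, and contradict via Coleman classicality. (Your extra verification that $d(3,\rhobar')=0$ is harmless but not needed --- the multiplicity pattern alone gives $m\leq 1$ --- and the confinement to $w_3$ does indeed rest on the monotonicity of $n\mapsto d(3+n(p-1),\rhobar')$, which the paper cites as \cite[Proposition 6.12]{BergdallPollack-GhostPaper-II}; you correctly flag this as the load-bearing step.) Your locally irreducible case, however, is genuinely different and arguably cleaner: you observe that a $p$-new form of level $\Gamma_1(N)\cap\Gamma_0(p)$ is special at $p$, hence locally reducible, so $d^{\new}(k,\rhobar)\equiv 0$ and the whole ghost series degenerates to $\sum_i t^i$ with all slopes $0$, which contradicts the non-ordinarity of every classical $\rhobar$-eigenform (immediate from fact (i)). The paper instead invokes Theorem \ref{thm:equivalence}(a) to produce a non-ordinary form in the single weight $b(\rhobar)$ and uses the same monotonicity of $d$ to show only that $g_1=\dotsb=g_{d}=1$, so that just the first $d$ ghost slopes at that one weight vanish. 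Your version requires one extra standard input (local--global compatibility at primes where the form is special) but dispenses with both Theorem \ref{thm:equivalence}(a) and the monotonicity statement in that case, and it shows the stronger assertion that the ghost prediction fails in \emph{every} weight of the component; the paper's version stays entirely within the toolkit it has already assembled for Theorem \ref{thm:equivalence}.
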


\begin{proof}
First assume that $\rhobar|_D$ is irreducible. By Theorem \ref{thm:equivalence}, after replacing $\rhobar$ by a twist, we may assume that $\rhobar$ arises from a non-ordinary cuspform of weight $k$ between $2$ and $\frac{p+3}{2}$.   We will show that the ghost series for $\rhobar$ does not correctly compute the slopes of $U_p$ in weight $k=b(\rhobar)$.

To this end, write $G_{\rhobar}(w,t) = 1 +g_1(w) t + g_2(w) t^2 + \dotsb$.
The smallest $k$ such that $w_k$ is possibly a zero of any $g_i(w)$ is $k = b(\rhobar)$.  From the definition of the ghost series, if $w_{b(\rhobar)}$ is a zero is for $g_i(w)$ then 
$$
i >  d = d(b(\rhobar),\rhobar) = \dim S_{b(\rhobar)}(\rhobar).
$$
Since $p\geq 5$, we have that $n\mapsto d(b(\rhobar)+n(p-1),\rhobar)$ is increasing by \cite[Proposition 6.12]{BergdallPollack-GhostPaper-II}. So, $G_{\rhobar}(w,t) = 1 + \dotsb + t^d + \dotsb$ and thus the first $d$ slopes of the Newton polygon of this ghost series (at any weight!) are all 0.  In particular, if the slopes attached to this series equaled the true slopes of $U_p$, then the ($T_p$-)slopes in $S_{b(\rhobar)}(\rhobar)$ would be all be 0.  But this contradicts the fact that $\rhobar$ arises from a {\it non-ordinary} form of weight $b(\rhobar)$.

Now we consider the case where $\rhobar|_D$ is reducible (but still irregular).  By part \ref{part:thm:equivalence-red} of Theorem \ref{thm:equivalence}, after replacing $\rhobar$ by a twist, we may assume that $S_{p+2}(\rhobar) \neq \{0\}$ and $v_p(a_p(g)) > 1$ for all $g \in S_{p+2}(\rhobar)$.  We will show that the ghost series for $\rhobar$ does not correctly compute slopes in weight $p+2$.

To this end, again write $G_{\rhobar}(w,t) = 1 +g_1(w) t + g_2(w) t^2 + \dotsb$.  
Since $S_{p+2}(\rhobar)$ is non-zero, we have that $g_1(w_{p+2}) \neq 0$.  Thus, as argued above, $g_1(w) = (w-w_3)^{m}$, and $m \leq 1$ because the multiplicity pattern described on pg.\ \pageref{mult-pattern} begins $1,2,\dotsc$.  Thus, the lowest slope of the Newton polygon of $G_{\rhobar}(w_{p+2},t)$ is at most 1. But we've also assumed every slope in $S_{p+2}(\rhobar)$ is larger than one. This is a contradiction.
\end{proof}

\section{Conclusion}\label{sec:conclusion}

We have presented a new, salvaged, ghost conjecture for Galois representations modulo $p$, along with theoretical and computational evidence. We end by briefly describing two questions raised by our work (separate from whether or not the conjecture is true).

\subsection{Small primes}
\label{sec:small-primes}
Let $p = 2$. According to the definitions given here, the trivial representation $\overline \rho = 1 \oplus 1$ is irregular. This is rather confouding. The $2$-adic slopes at level $\SL_2(\Z)$, where the only Galois representation modulo 2 is the trivial one, are the most carefully studied of all examples. It is a major motivating example for both Buzzard's work and the work of the authors. The ghost formalism is believed to predict the $2$-adic slopes at level $\SL_2(\Z)$, yet it does not (nor does Buzzard's work) correctly predict the $2$-adic $\rhobar$-slopes at level $\Gamma_0(5)$ where $\rhobar$ is still the trivial representation.

Of course, the second condition of Theorem \ref{thm:equivalence} holds for the trivial representation when $p = 2$ and $N=1$, so perhaps that condition is the correct one for which the ghost series will predict slopes (regardless of $p$ being small). This is in-line with Buzzard's {\em ad hoc} definition of $\Gamma_0(N)$-regular when $p = 2$ in \cite{Buzzard-SlopeQuestions}. Note however that even in the $\Gamma_0(N)$-regular situation, the $2$-adic ghost conjecture requires a modification as in \cite[Section 5]{BergdallPollack-GhostPaperShort}.

The authors have not seen a reason to exclude $p=3$ from Conjecture \ref{conj:ghost}. Yet, we want to be careful. We have not done extensive testing for $p = 3$ because the derivations of Galois multiplicities in Section \ref{subsec:Galois-mult} relies on \cite{AshStevens-Duke}, which assumes that $p \geq 5$. The same methods work immediately when $p=2,3$ as long as $\Gamma_1(N)$ is a group without $p$-torsion, but stating a conjecture for $p = 3$ under such a restriction would be rather prosaic.

\subsection{Integral slopes and local Galois representations}
It is conjectured that if $p$ is odd, $k$ is even, and $v_p(a_p) \not\in \mathbb N$ then $\overline V_{k,a_p}$ is irreducible. See \cite[Conjecture 4.1.1]{BuzzardGee-Slopes}. The possibility of such a result was discussed ``in emails between Breuil, Buzzard, and Emerton'' in the early 2000's according to {\em loc.\ cit.} To give it a neutral name we'll call it the {\em integral slope conjecture}. We've been told by Breuil and Buzzard that the inspiration for discussing the integral slope conjecture was numerical calculations of global slopes and explicit local calculations in small weights. In light of this article's emphasis on reducibility being a faulty lens through which to predict global slopes, we offer the following comments. 

When $p$ is odd and $k$ is even, the irregular $\overline V_{k,a_p}$ in the sense of Definition \ref{defn:irregular-local} are all irreducible. So, the integral slope conjecture might as well say if $k$ is even and $v_p(a_p) \not\in \mathbb N$ then $\overline V_{k,a_p}$ is irregular.  This re-phrasing opens up the possibility of stating a more full conjecture. For instance, $p = 2$ is not included in the original integral slope conjecture because when $p = 2$ and $k \equiv 4 \bmod 6$, then $\overline V_{k,0}$ is reducible and hence so is $\overline V_{k,a_p}$ for any $v_p(a_p) \gg 0$ (see \cite[Remark 4.1.6]{BuzzardGee-Slopes}). Yet, these examples are in fact all irregular. Separate objections are raised when $k$ is odd (see \cite[Remark 4.1.5]{BuzzardGee-Slopes}), each of which is rendered moot if instead the conjecture begins with the assumption that $v_p(a_p) \not\in \frac{1}{2}\mathbb N$ and ends with the conclusion that $\overline V_{k,a_p}$ is irregular. (One might also note that in Proposition \ref{prop:equiv-regular} we could additionally add the condition (d):\ $\overline r$ is a twist of $\overline{V}_{k,0}$ for some $k$.) 

So, we know no counter-examples or counter-arguments for the statement:
\begin{equation}
\tag{$\star$}\label{eqn:irregular-conj}
\parbox{\dimexpr\linewidth-10em}{
\em ``For any prime $p$, if $k$ is even and $v_p(a_p) \not\in \N$ or if $k$ is odd and $v_p(a_p) \not\in \frac{1}{2}\N$, then $\overline V_{k,a_p}$ is irregular.''
}
\end{equation}
In fact, \eqref{eqn:irregular-conj} is supported by theorems of Buzzard and Gee \cite[Theorem 1.6]{BuzzardGee-SmallSlope} and Bhattacharya and Ghate \cite[Theorem 1.1]{BhattacharyaGhate-Slope12}. An example theorem derived from the latter work is that $\overline V_{k,a_p} \otimes \omega^{-1} \cong \ind(1)$, as long as $p \geq 5$ and $1 < v_p(a_p) < 2$ and $k \equiv p+2 \bmod p^2(p-1)$.  This nicely complements the discussion in the previous paragraph, further illustrating why ``irregular'' replacing ``irreducible'' is necessary for any generalization of the integral slope conjecture. We note, finally, that \eqref{eqn:irregular-conj} is also consistent with a theorem of Nagel and Pande \cite[Theorem 0.1]{NagelPande-Slopes23} focused on $2 < v_p(a_p) < 3$, and work of Arsovski \cite[Theorem 1]{Arsovski-Reductions1}, which covers a more sporadic range of slopes.

This article began by correcting a false conjecture. So, here, it is best to wait for a more conceptual explanation before declaring \eqref{eqn:irregular-conj} as an outright conjecture. Even the integral slope conjecture, limited to even weights and odd $p$, is based on a combination of computational evidence and theoretical evidence restricted to low weights and slopes. It has not, for instance, been contextualized within a wider context of all local Galois representations modulo $p$ nor, except in the above paragraphs, have issues with odd $k$ or $p = 2$ been really confronted. Does replacing reducibility by regularity offer any conceptual clarity? There's certainly a fascinating opportunity here for  further explanation.

\bibliography{ghost_oversight_bib}
\bibliographystyle{abbrv}

\end{document}